\documentclass[12pt,reqno]{amsart}

\usepackage{amsmath,amsthm,amscd,amsfonts,amssymb,mathtools}
\usepackage{mathrsfs}
\usepackage{graphicx,xcolor}
\usepackage{enumitem}
\usepackage[bookmarksnumbered,colorlinks,plainpages]{hyperref}

\newcommand{\Cp}{\mathbb C^+}
\newcommand{\R}{\mathbb R}
\newcommand{\C}{\mathbb C}

\renewcommand{\d}{\,\mathrm{d}}

\newcommand{\Pp}{\mathcal P}
\newtheorem{theorem}{Theorem}[section]
\newtheorem{lemma}[theorem]{Lemma}
\newtheorem{proposition}[theorem]{Proposition}
\newtheorem{corollary}[theorem]{Corollary}
\theoremstyle{definition}

\theoremstyle{remark}

\newcommand{\boxp}{\boxplus}
\newcommand{\Var}{\operatorname{Var}}
\newcommand{\Law}{\operatorname{Law}}
\newcommand{\supp}{\operatorname{supp}}
\newcommand{\dd}{\,\mathrm d}

\numberwithin{equation}{section}

\begin{document}
\setcounter{page}{1}


\title[Higher-Order Convex Orders under Free Additive Convolution]{Higher-Order Convex Orders under Free Additive Convolution}

\author[Cheng-Xuan Soong]{Cheng-Xuan Soong}

\address{
National Center for Theoretical Sciences, Mathematics Division,
National Taiwan University, Taipei 106, Taiwan
}

\email{cxsoong@ncts.ntu.edu.tw}
\date{\today}

\begin{abstract}
We prove that free additive convolution preserves the $k$-convex
order for every $k\ge2$, assuming finite $(k-1)$st absolute moments
for the compared measures and a finite $\max\{2,k-1\}$th absolute
moment for the convolving measure.
In particular, the case $k=4$ gives an affirmative answer to a
question of Hein\"avaara concerning fourth-order convolution
comparison.

\medskip

\noindent\textit{Keywords.}
$k$-convex order, free additive convolution, analytic subordination.
\end{abstract}

\maketitle

\section{Introduction}

Let $\Pp_c(\mathbb R)$ denote the set of compactly supported
Borel probability measures on $\mathbb R$, and let
$\Pp_k(\mathbb R)$ be the set of probability measures with finite
$k$th absolute moment, with the convention that
$\Pp_0(\mathbb R)$ is the set of all Borel probability measures
on $\mathbb R$. We write $\ast$ and $\boxp$ for the
classical and free additive convolutions, respectively. Free additive
convolution was introduced by Voiculescu \cite{Voi86}; see also
\cite{BV93,NS06}.

Higher-order convex orders form a natural hierarchy of comparison
relations between probability measures. We follow the formulation of
Denuit, Lef\`evre, and Shaked \cite{DLS98}; see
Subsection~\ref{subs:ho} for the precise definition. The cases $k=1$
and $k=2$ are the usual stochastic order and convex order,
respectively.

A basic question is whether such comparison relations are preserved
under convolution. In classical probability, the $k$-convex order is
preserved under classical convolution
\cite[Proposition~3.11]{DLS98}. The purpose of this paper is to
establish the corresponding result for free additive convolution under
natural moment assumptions.

Our main result is the following.

\begin{theorem}\label{thm:main1}
Let $k\ge2$. Let $\mu_1,\mu_2\in\Pp_{k-1}(\R)$ and let $\nu\in\Pp_{\max\{2,k-1\}}(\R)$. If $\mu_1\prec_{k\text{-cx}}\mu_2$, then
\[
    \mu_1\boxp\nu
    \prec_{k\text{-cx}}
    \mu_2\boxp\nu.
\]
\end{theorem}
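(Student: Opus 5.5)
We use the Denuit--Lef\`evre--Shaked characterization: $\mu_1\prec_{k\text{-cx}}\mu_2$ holds iff the first $k-1$ moments agree and $\int (x-t)_+^{k-1}\,\d\mu_1(x)\le \int (x-t)_+^{k-1}\,\d\mu_2(x)$ for every $t\in\R$. Equivalently, $\int f\,\d\mu_1\le\int f\,\d\mu_2$ for every $f$ with $f^{(k-2)}$ convex and suitable growth.

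The argument has two parts. First, the moments $m_1,\dots,m_{k-1}$ of $\mu\boxp\nu$ are polynomials in the free cumulants of $\mu$ and $\nu$ up to order $k-1$. These cumulants are in turn determined by the moments up to order $k-1$, which are finite by the hypotheses. Since $\mu_1$ and $\mu_2$ share these moments, $\mu_1\boxp\nu$ and $\mu_2\boxp\nu$ share them as well. So everything reduces to the inequality for the test functions $\phi_t(x)=(x-t)_+^{k-1}$.

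For the inequality we avoid Fourier methods, which fail in the free setting, and use an operator model. Realize $\mu_i\boxp\nu$ as the law of $a_i+b$, with $a_i$ and $b$ free. The natural route is analytic subordination, $G_{\mu_i\boxp\nu}=G_{\mu_i}\circ\omega_i$, but the subordination functions depend on $\mu_i$, which makes monotonicity awkward. The cleaner plan is an interpolation. Set $\mu_s=(1-s)\mu_1+s\mu_2$; since the $k$-convex order is defined by linear inequalities, the whole family lies between $\mu_1$ and $\mu_2$. Then show that
\[
F(s)=\int \phi\,\d(\mu_s\boxp\nu)
\]
is nondecreasing for $\phi$ in the generating class. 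Compute $F'(s)$ by the free analogue of a first variation: the derivative of $\mu\mapsto\int\phi\,\d(\mu\boxp\nu)$ in the direction $\mu_2-\mu_1$ equals $\int \Psi_s\,\d(\mu_2-\mu_1)$. Here $\Psi_s(x)$ is obtained from $\phi$ through the subordination kernel: writing $\omega_s$ for the subordination function of $\mu_s\boxp\nu$, we have $\Psi_s(x)=\int\phi\,\d\rho_{s,x}$, where $\rho_{s,x}$ is the probability measure with Cauchy transform $1/(\omega_s(z)-x)$. This is the Biane Markov-kernel picture: $E[\phi(a+b)\mid a]=\Psi(a)$.

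The main obstacle is showing that $\Psi_s$ again has a convex $(k-2)$nd derivative, with the growth needed for integrability against $\mu_i\in\Pp_{k-1}$.

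- The family $x\mapsto\rho_{s,x}$ is a Markov kernel whose Cauchy transform is $(\omega_s(z)-x)^{-1}$. Differentiating in $x$ gives $(\omega_s(z)-x)^{-j-1}$ times $j!$.
- Convexity of the $(k-2)$nd derivative would follow if $\rho_{s,x}$ were a translation family, $\rho_{s,x}=\rho_{s,0}(\cdot-x)$. That is false in general, so a direct argument fails.

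The fallback is to work with the resolvent test functions $\phi(x)=\operatorname{Im}\,(z-x)^{-1}$, or with their $(k-1)$-fold derivatives, which generate the cone of $k$-convex functions after integrating in $z$. For these, linearity in the resolvent gives the required positivity directly, because $\operatorname{Im}\,(\omega_s(z)-x)^{-k}$ has the correct sign structure, using $\operatorname{Im}\omega_s\ge\operatorname{Im} z$.

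A final approximation step removes the moment conditions. Approximate $\mu_i$ and $\nu$ by compactly supported measures that preserve the $k$-convex order, for instance by truncation followed by a correction, and use the continuity of $\boxp$ in the Wasserstein-$(k-1)$ metric. This last step is where the assumption $\nu\in\Pp_{\max\{2,k-1\}}$ enters.
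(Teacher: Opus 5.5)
Your moment-matching step is the same as the paper's. The monotonicity argument, however, has a genuine gap at its first move: the first-variation formula is wrong. Biane's kernel gives $\int\phi\,\dd(\mu_s\boxp\nu)=\int\Psi_s\,\dd\mu_s$, but $\Psi_s(x)=\int\phi\,\dd\rho_{s,x}$ depends on $s$ through $\omega_s$. So $F'(s)$ also contains the term $\int\partial_s\Psi_s\,\dd\mu_s$, which you drop.

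On resolvents the correct derivative (Belinschi--Shlyakhtenko, as used in the paper) is $\partial_sG_{\mu_s\boxp\nu}(z)=\omega_s'(z)\,(G_{\mu_2}-G_{\mu_1})(\omega_s(z))$. Your formula instead yields $(G_{\mu_2}-G_{\mu_1})(\omega_s(z))$. Already for semicircular $\nu$ of variance $\sigma^2>0$ one has $\omega_s'(z)=1/(1+\sigma^2G_{\mu_s}'(\omega_s(z)))\not\equiv1$. Hence $k$-convexity of your $\Psi_s$ would prove nothing. Ironically, that property is the easy part: on resolvents $\partial_x^k(\omega_s(z)-x)^{-1}=k!\,G_{\rho_{s,x}}(z)^{k+1}$, and the Hermite--Genocchi formula writes $G_{\rho_{s,x}}^{k+1}$ as a $(k+1)$st-order Cauchy transform of a probability measure.

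Your fallback does not repair this, for three reasons. First, for $k\ge2$ and $w\in\Cp$, $\Im(w-x)^{-k}$ changes sign as $x$ runs over $\R$, since $\arg(w-x)^{-k}$ sweeps $(-k\pi,0)$. Second, the complex factor $\omega_s'(z)$ destroys any sign information coming from $\Im\omega_s\ge\Im z$. Third, $x$-derivatives of $\Im(z-x)^{-1}$ are not $k$-convex, so they do not generate the cone; modulo polynomials of degree $<k$, that cone is generated by $(x-t)_+^{k-1}$.

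The missing idea is how to absorb the factor $\omega_s'$. The paper first rewrites
$G_{\mu_2}(w)-G_{\mu_1}(w)=k!\int(U_k^{\mu_2}-U_k^{\mu_1})(t)\,(w-t)^{-k-1}\dd t$, where $U_k^\mu(t)=\frac1{(k-1)!}\int(x-t)_+^{k-1}\dd\mu(x)$. It then uses
$\omega_s'(z)(\omega_s(z)-t)^{-k-1}=-\frac1k\partial_zG_{\rho_{s,t}}(z)^k=\int(z-u)^{-k-1}\dd K^{(k)}_{s,t}(u)$. Here $K^{(k)}_{s,t}$ is the law of a $\operatorname{Dirichlet}(1,\ldots,1)$-weighted average of $k$ independent copies of $\rho_{s,t}$. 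Differentiating $G^k$, rather than taking $G^{k+1}$, is exactly what produces the factor $\omega_s'$.

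Consequently $G_{\mu_2\boxp\nu}-G_{\mu_1\boxp\nu}$ is $k!$ times the $(k+1)$st-order Cauchy transform of the positive measure $M_k=\int_0^1\int(U_k^{\mu_2}-U_k^{\mu_1})(t)\,K^{(k)}_{s,t}\dd t\dd s$. A variance bound on $K^{(k)}_{s,t}$, which is where $\nu\in\Pp_2$ is used, gives $M_k([-R,R])=o(R)$. A uniqueness theorem for $(k+1)$st-order Cauchy transforms of signed measures with this growth then identifies $M_k$ with $(U_k^{\mu_2\boxp\nu}-U_k^{\mu_1\boxp\nu})\dd t$, which gives the required positivity.

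This argument handles unbounded supports directly. Your final approximation step, by contrast, presupposes compactly supported approximants that keep the first $k-1$ moments exactly equal and preserve $U_k^{\mu_1}\le U_k^{\mu_2}$. That is a nontrivial construction, and you do not provide it.
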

The case $k=1$ was already established for free additive convolution in
\cite[Proposition~4.16]{BV93}.
The case $k=4$ is particularly relevant to recent work of Hein\"avaara
\cite{Hein26}. He introduced a fourth-order comparison relation, denoted
by $\prec_4$, which we recall in Subsection~2.1. This relation arises
naturally in the comparison between classical and free convolution:
the first three moments of $\mu\ast\nu$ and $\mu\boxplus\nu$ agree,
whereas their fourth moments need not. For compactly supported
probability measures, Hein\"avaara proved
$\mu\boxplus\nu\prec_4\mu\ast\nu$
\cite[Theorem~1.3]{Hein26}. He also showed that $\prec_4$ is preserved
under classical convolution \cite[Proposition~5.6]{Hein26}: if
$\mu_1,\mu_2,\nu\in\mathcal P_c(\mathbb R)$ and
$\mu_1\prec_4\mu_2$, then
$\mu_1\ast\nu\prec_4\mu_2\ast\nu$.
He then asked whether the analogous preservation property holds for
free additive convolution, namely whether
$\mu_1\boxplus\nu\prec_4\mu_2\boxplus\nu$ follows from
$\mu_1\prec_4\mu_2$ \cite[Question~5.8]{Hein26}.

We answer this question affirmatively, in fact under the weaker
assumption of finite third absolute moments. Under this assumption, we show that
the relation $\prec_4$ coincides with the $4$-convex order.
Consequently, Theorem~\ref{thm:main1} yields the following result.
\begin{corollary}\label{cor:main2}
Let $\mu_1,\mu_2,\nu\in\Pp_3(\R)$. If $\mu_1\prec_4\mu_2$, then
\[
    \mu_1\boxp\nu
    \prec_4
    \mu_2\boxp\nu.
\]
\end{corollary}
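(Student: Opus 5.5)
The plan is to reduce the corollary to Theorem~\ref{thm:main1} with $k=4$. This needs the identification, stated just before the corollary, of $\prec_4$ with $\prec_{4\text{-cx}}$ on $\Pp_3(\R)$. I would prove that identification first as a lemma and then transfer the relation in both directions.

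Recall the definition. $\mu\prec_4\nu$ means that the first three moments agree and
\[
\int (x-t)_+^3\d\mu(x)\le\int (x-t)_+^3\d\nu(x)\qquad\text{for all } t\in\R,
\]
or Heinävaara's equivalent formulation in Subsection~2.1. The $4$-convex order of \cite{DLS98} is characterized on $\Pp_3(\R)$ by the same two conditions: equality of the moments of order $1,2,3$, and domination of the truncated cubic moments $\int (x-t)_+^3$. Both sets of integrals are finite precisely because third absolute moments are finite. So the lemma reduces to checking that the two definitions produce the same test conditions.

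\begin{itemize}
\item If Heinävaara's $\prec_4$ is phrased through the functions $(x-t)_+^3$, the lemma is immediate from the Denuit--Lefèvre--Shaked characterization of $\prec_{4\text{-cx}}$ by these functions together with moment matching.
\item If $\prec_4$ is phrased through all $C^4$ (or $4$-convex) test functions $f$ with $f^{(4)}\ge0$ and suitable growth, I would use Taylor's formula with integral remainder,
\[
f(x)=p_3(x)+\frac16\int (x-t)_+^3 f^{(4)}(t)\d t+\text{(left-tail term)}.
\]
Integrating against $\mu-\nu$ removes the cubic polynomial $p_3$ because the moments match. Fubini, justified by third moments, then leaves a nonnegative mixture of the differences in truncated cubic moments.
\item For the converse direction, I would approximate $(x-t)_+^3$ by smooth $4$-convex functions with at most cubic growth and pass to the limit by dominated convergence. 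The cubic growth bound comes from the $\Pp_3$ assumption.
\end{itemize}

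With the lemma in hand, the corollary follows in three steps.

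\begin{enumerate}
\item $\mu_1\prec_4\mu_2$ gives $\mu_1\prec_{4\text{-cx}}\mu_2$ by the lemma.
\item Apply Theorem~\ref{thm:main1} with $k=4$. Its hypotheses hold: $\mu_i\in\Pp_3(\R)$ and $\nu\in\Pp_{\max\{2,3\}}(\R)=\Pp_3(\R)$. This yields $\mu_1\boxp\nu\prec_{4\text{-cx}}\mu_2\boxp\nu$.
\item Convert back to $\prec_4$ with the lemma. This requires $\mu_i\boxp\nu\in\Pp_3(\R)$.
\end{enumerate}

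Step~3 is the main obstacle. The claim $\mu_i\boxp\nu\in\Pp_3(\R)$ is the free analogue of moment additivity: finite $p$th absolute moments are preserved under $\boxp$. I would take this from the paper's preliminaries if it is established there, since Theorem~\ref{thm:main1} must already use it. Otherwise I would cite the known result of Bercovici--Voiculescu type that $\boxp$ preserves $\Pp_p$. That result can be derived from subordination: with $G_{\mu\boxp\nu}=G_\mu\circ\omega$, the expansion of $\omega(z)-z$ at infinity controls the tail moments. Once this integrability is secured, the remaining steps are formal.
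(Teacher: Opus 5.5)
Your proposal is correct and follows the paper's route: the paper proves your identification lemma as Proposition~\ref{prop:Heinavaara-order}, then applies Theorem~\ref{thm:main1} with $k=4$. In that proposition, under the paper's definition of $\prec_4$ via $C^4$ functions with compactly supported $f^{(4)}\ge0$, moment matching comes from testing $\pm p$ for cubic $p$, and the potential inequality comes from the test functions $f_\varphi$ with $f_\varphi^{(4)}=\varphi\ge0$, which is your mollification step. The direction you treat by Taylor's formula is immediate, since every admissible $f$ is $4$-convex.

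The membership $\mu_i\boxp\nu\in\Pp_3(\R)$ you flag in Step~3 is established inside the proof of Theorem~\ref{thm:main1} via the noncommutative Minkowski inequality, so citing it from there suffices. Your fallback subordination-expansion sketch would need more care, because asymptotic expansions of $G$ at infinity do not by themselves detect a finite odd absolute moment.
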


The main idea of the proof is to use the characterization of the
$k$-convex order by matching moments through order $k-1$ together
with a pointwise comparison of the potentials $U_k^\mu$.
The preservation of the moment conditions is standard, while the main
difficulty is to establish the corresponding comparison for the
potentials after free additive convolution. For this, we interpolate
between $\mu_1$ and $\mu_2$ and differentiate along the interpolation.
This produces a higher-order subordination kernel, whose key feature
is that it can be represented as the $(k+1)$st-order Cauchy transform
of a probability measure. A localization estimate and a uniqueness
theorem for higher-order Cauchy transforms then yield the desired
comparison of the convolved potentials.

The paper is organized as follows. Subsection~\ref{subs:ho} recalls the
$k$-convex order and Hein\"avaara's fourth-order relation, while
Subsection~\ref{sub:an} collects the analytic tools from free probability
needed below. Section~\ref{sec:ukmu} develops the higher-order potentials and
establishes their basic properties. Section~\ref{sec:subo} constructs the positive
subordination kernels and proves the required localization estimate.
Section~\ref{pf} completes the proof of Theorem~\ref{thm:main1}.

\section{Preliminaries} Throughout the remainder of the paper, $k\ge2$ is an integer. 
\subsection{Higher-order convex orders}\label{subs:ho}
Following \cite{DLS98}, we say that a function $f:\R\to\R$ is
$k$-convex if
\[
\begin{vmatrix}
1 & 1 & \cdots & 1\\
x_0 & x_1 & \cdots & x_k\\
\vdots & \vdots & \ddots & \vdots\\
x_0^{k-1} & x_1^{k-1} & \cdots & x_k^{k-1}\\
f(x_0) & f(x_1) & \cdots & f(x_k)
\end{vmatrix}
\ge0
\]
for every $x_0<\cdots<x_k$. For $f\in C^k(\R)$, $k$-convexity is equivalent to
$f^{(k)}\ge0$. For $\mu_1,\mu_2\in\Pp_0(\R)$, we write $\mu_1\prec_{k\text{-cx}}\mu_2$ if
\[
    \int_\R f(x)\dd\mu_1(x)
    \le
    \int_\R f(x)\dd\mu_2(x)
\]
for every $k$-convex function $f:\R\to\R$ for which both integrals
exist.  

We also recall the fourth-order relation considered by Hein\"avaara
in \cite{Hein26}. For $\mu_1,\mu_2\in\Pp_3(\R)$, we write $\mu_1\prec_4\mu_2$ if
\[
    \int_\R f(x)\dd\mu_1(x)
    \le
    \int_\R f(x)\dd\mu_2(x)
\]
for every $f\in C^4(\R)$ such that $f^{(4)}\ge0$ and $\supp f^{(4)}$ is compact.
\subsection{Cauchy transforms and subordination}
\label{sub:an}

Let $\mu\in\Pp_0(\R)$. We denote its Cauchy transform and reciprocal Cauchy transform by
\[
    G_\mu(z)
    :=
    \int_\R \frac{\dd\mu(x)}{z-x},
    \quad
    F_\mu(z)
    :=
    \frac{1}{G_\mu(z)},
    \quad z\in\Cp.
\]
Let $\nu\in\Pp_0(\R)$. Then
\cite[Theorem~4.1]{BB07} shows that there exist unique analytic
functions $\omega_\mu,\omega_\nu:\Cp\to\Cp$, called the
subordination functions, such that
\[F_{\mu\boxp\nu}(z)
    =
    F_\mu(\omega_\mu(z))
    =
    F_\nu(\omega_\nu(z)),
    \quad z\in\Cp,\]
\begin{equation}\label{eq:subordination-sum}
    \omega_\mu(z)+\omega_\nu(z)-z
    =
    F_{\mu\boxp\nu}(z),
\end{equation}
and
\begin{equation}\label{eq:subordination-normalization}
    \Im\omega_\mu(z)\ge\Im z,
    \quad
    \Im\omega_\nu(z)\ge\Im z,
    \quad z\in\Cp,
\end{equation}
together with
\[
    \frac{\omega_\mu(z)}{z}\longrightarrow1,
    \quad
    \frac{\omega_\nu(z)}{z}\longrightarrow1
    \quad\text{as }z\to\infty\text{ nontangentially}.
\]

We now specialize to measures with finite second moment.
By \cite[Proposition~2.2]{Maa92}, after translation, $\mu\in\Pp_2(\R)$ and has mean $m$ if and only if
\[F_\mu(z)
    =
    z-m-\int_\R\frac{\dd\tau_\mu(s)}{z-s},
    \quad z\in\Cp,\]
for some finite positive measure $\tau_\mu$, in which case $\tau_\mu(\R)=\Var(\mu)$. In particular,
\begin{equation}\label{eq:finite-variance-bound}
    |F_\mu(z)-z+m|
    \le
    \frac{\Var(\mu)}{\Im z},
    \quad z\in\Cp.
\end{equation}
Conversely, if for some $m\in\R$ and $C<\infty$,
\[
    |F_\mu(z)-z+m|
    \le
    \frac{C}{\Im z},
    \quad z\in\Cp,
\]
then $\mu\in\Pp_2(\R)$ has mean $m$ and $\Var(\mu)\le C$.


We conclude this subsection with the following uniqueness lemma.
\begin{lemma}\label{lem:uniqueness}
Let $M$ be a signed Radon measure on $\R$ such that
\[
    |M|([-R,R])=o(R)
    \quad\text{as } R\to\infty.
\]
Then, for every $z\in\Cp$,
\[
    \int_\R
    \frac{\dd |M|(s)}{|z-s|^{k+1}}
    <\infty.
\]
In particular, for every $z\in\Cp$, the function
$s\mapsto (z-s)^{-k-1}$ is integrable with respect to $M$.

Moreover, if
\[
    \int_\R
    \frac{\dd M(s)}{(z-s)^{k+1}}
    =0
    \quad\text{for all } z\in\Cp,
\]
then $M=0$.
\end{lemma}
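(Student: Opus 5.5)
The plan has two parts: first integrability, then uniqueness via a reduction to a Stieltjes-type inversion.

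\emph{Integrability.} Write $\phi(R)=|M|([-R,R])$. Then $\phi$ is nondecreasing, finite, and $\phi(R)=o(R)$. Fix $z=x+iy$ with $y>0$. We have $|z-s|\ge \max\{y,|s-x|\}$, so it suffices to bound $\int (1+|s|)^{-k-1}\dd|M|(s)$, up to constants depending on $z$. We do this by a layer-cake or summation-by-parts argument:
\[
\int_\R \frac{\dd|M|(s)}{(1+|s|)^{k+1}}=(k+1)\int_0^\infty \frac{\phi(R)}{(1+R)^{k+2}}\dd R ,
\]
and the right-hand side is finite because $\phi(R)\le C(1+R)$ and $k+2\ge 4>2$. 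Only linear growth of $\phi$ is needed here. The $o(R)$ hypothesis is reserved for the uniqueness step, which also shows that the integrability of $(z-s)^{-k-1}$ with respect to $M$ is immediate.

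\emph{Uniqueness.} Define $H(z)=\int_\R (z-s)^{-k-1}\dd M(s)$, which is analytic on $\Cp$. The idea is to recover the ordinary Cauchy transform, or a regularized version of it, by integrating $k$ times in $z$. Concretely, set
\[
\Phi(z)=\int_\R \Bigl[\frac{1}{z-s}-P_z(s)\Bigr]\dd M(s),
\]
where $P_z(s)$ is a Taylor correction chosen so that the integrand decays like $|s|^{-2}$. A simpler route is the following. Since $\frac{\dd^k}{\dd z^k}\frac1{z-s}=(-1)^k k!\,(z-s)^{-k-1}$, the vanishing of $H$ means that the $k$th derivative of the regularized transform vanishes, so $\Phi$ is a polynomial of degree $<k$ in $z$. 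I would take the regularization
\[
K(z,s)=\frac1{z-s}+\frac{1}{s}\sum_{j=0}^{k-1}\Bigl(\frac zs\Bigr)^j\mathbf 1_{|s|>1}.
\]
Then $K=O(|s|^{-k-1})$ for $|s|>1$, and $\partial_z^kK=(-1)^kk!(z-s)^{-k-1}$. It follows that $\Phi(z)=\int K(z,s)\dd M(s)$ is analytic with $\Phi^{(k)}=(-1)^kk!H=0$, so $\Phi$ is a polynomial $p$ of degree $\le k-1$.

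Next, apply the Stieltjes inversion formula. The correction terms are real polynomials in $z$ for real $z$, so their imaginary parts vanish as $y\downarrow0$ (since $\Im z\to0$ and $z^j$ is continuous). Hence $-\frac1\pi\Im\Phi(x+iy)$ is the Poisson integral of $M$ plus a term tending to zero locally uniformly. As $y\downarrow 0$, the Poisson integrals $\frac1\pi\int \frac{y}{(x-s)^2+y^2}\dd M(s)$ converge vaguely to $M$; this holds for signed measures with $\int(1+s^2)^{-1}\dd|M|<\infty$, which is guaranteed by the growth bound. On the other side, $-\frac1\pi\Im p(x+iy)\to0$ locally uniformly. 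Therefore $M=0$.

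\emph{Where $o(R)$ is needed.} The argument above seems not to use $o(R)$, which is suspicious. Indeed, Lebesgue measure $M=\dd s$ is a counterexample to the naive version: $\int (z-s)^{-k-1}\dd s=0$ for $k\ge1$. The flaw is in the last step. Lebesgue measure gives $\Phi\equiv -i\pi+{}$polynomial, and $\Im$ of that constant does not vanish, so the claim that $\Im p\to0$ is exactly what fails. Since $p$ is a polynomial, $\Im p(x+iy)\to\Im p(x)$, which is not zero if $p$ has non-real coefficients. The true conclusion of the inversion step is therefore $M=-\frac1\pi\Im p(x)\dd x$, a polynomial density $q(x)\dd x$ of degree $\le k-1$. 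The main obstacle, then, is excluding nonzero polynomial densities, and this is precisely what the hypothesis does: $|M|([-R,R])=\int_{-R}^R|q|\,\dd x$ is $\ge cR$ unless $q\equiv0$, contradicting $o(R)$. Hence $q=0$ and $M=0$.

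The delicate technical points are verifying that $\Phi$ is analytic by differentiating under the integral, which uses dominated convergence from the integrability part, and justifying vague convergence of the Poisson integrals for signed $M$ with $\int(1+s^2)^{-1}\dd|M|<\infty$.
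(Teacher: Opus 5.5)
Your proposal is correct in its corrected form, but the uniqueness part takes a different route from the paper. The paper stays in the real variable throughout:
\begin{itemize}
\item It uses $-\frac1\pi\Im\frac{k!}{(x+iy-s)^{k+1}}=(-1)^k\partial_x^kP_y(x-s)$ to turn the hypothesis into $\partial_x^k(P_y*M)\equiv0$.
\item It integrates by parts against $\varphi\in C_c^\infty$ and lets $y\downarrow0$, obtaining $\int\varphi^{(k)}\dd M=0$.
\item It then invokes the distributional fact that vanishing $k$th derivative forces a polynomial density.
\end{itemize}

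You instead integrate $k$ times in $z$. Your Taylor-regularized kernel, which equals $z^k/(s^k(z-s))$ for $|s|>1$, makes $\Phi$ well defined. Then $\Phi^{(k)}\equiv0$ on the connected set $\Cp$ gives a polynomial $p$ by elementary complex analysis. This replaces the distributional lemma, and the density appears explicitly as $-\frac1\pi\Im p(x)$.

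The price is some bookkeeping in the inversion step. Under $o(R)$ growth, $\int_{|s|>1}|s|^{-1}\dd|M|(s)$ may diverge, so you cannot split $\Phi$ into a Cauchy transform plus a correction polynomial. You should instead take imaginary parts inside the integral. There the $j=0$ correction contributes nothing. The terms $\Im(z^j)s^{-j-1}$ with $j\ge1$ are $O(|s|^{-2})$ and tend to $0$ as $y\downarrow0$, locally uniformly in $x$.

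Both proofs rely on the same vague convergence $P_y*M\to M$, which holds because $\int_{|s|>1}|s|^{-2}\dd|M|<\infty$; the paper verifies this explicitly. Both finish with the same observation that $o(R)$ growth excludes a nonzero polynomial density.

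Your first claim that $-\frac1\pi\Im p(x+iy)\to0$ was wrong, as you noticed. The corrected conclusion $M=q(x)\dd x$ with $\deg q\le k-1$ is exactly the paper's endpoint.

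For integrability, your layer-cake bound is equivalent to the paper's dyadic-shell estimate. As you say, both use only linear growth of $|M|([-R,R])$.
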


\begin{proof}[\sc Proof.]
We first prove the asserted integrability. Fix $z\in\Cp$.
Since $|M|([-R,R])=o(R)$, we may choose $R_0>2|z|$ such that
\[
    |M|([-R,R])\le R
    \quad\text{for } R\ge R_0.
\]
On the one hand,
\[
    \int_{|s|\le R_0}
    \frac{\dd|M|(s)}{|z-s|^{k+1}}
    \le
    \frac{|M|([-R_0,R_0])}{(\Im z)^{k+1}}
    <\infty.
\]
On the other hand, for $n\ge0$, set
\[
    A_n:=\{s\in\R:2^nR_0<|s|\le2^{n+1}R_0\}.
\]
For $s\in A_n$, we have $|s|>2^nR_0>2|z|$, and hence
\[
    |z-s|
    \ge |s|-|z|
    \ge \frac{|s|}{2}
    >2^{n-1}R_0.
\]
Therefore,
\[\int_{A_n}\frac{\dd|M|(s)}{|z-s|^{k+1}}
    \le
    \frac{|M|([-2^{n+1}R_0,2^{n+1}R_0])}
         {(2^{n-1}R_0)^{k+1}} \le
    \frac{2^{n+1}R_0}
         {(2^{n-1}R_0)^{k+1}} =
    \frac{2^{k+2}}{R_0^k}2^{-nk}.\]
Since $k\ge2$, the series $\sum_{n\ge0}2^{-nk}$ converges, and the asserted integrability follows.

We now prove uniqueness. Let
\[
    P_y(x)
    :=
    \frac{1}{\pi}\frac{y}{x^2+y^2},
    \quad y>0,
\]
be the Poisson kernel.  We claim that, for every $\varphi\in C_c^\infty(\R)$,
\[
    \int_\R \varphi(x)(P_y*M)(x)\dd x
    \longrightarrow
    \int_\R \varphi(s)\dd M(s)
    \quad\text{as }y\downarrow0.
\]
Choose $A>0$ such that $\supp\varphi\subset[-A,A]$. For $|s|>2A$ and $0<y\le1$, we have
\[\int_\R |\varphi(x)|P_y(x-s)\dd x
    =
    \frac{1}{\pi}
    \int_{-A}^A
    \frac{y|\varphi(x)|}{(x-s)^2+y^2}\dd x \le
    \frac{4\|\varphi\|_{L^1}}{\pi|s|^2}.\]
Since $|M|$ is finite on compact sets and $|s|^{-2}$ is integrable with respect to $|M|$ outside a compact set by the dyadic-shell estimate, Tonelli's theorem gives
\[
    \int_{\R^2}
    |\varphi(x)|P_y(x-s)
    \dd(m\otimes|M|)(x,s)
    =
    \int_\R\int_\R
    |\varphi(x)|P_y(x-s)\dd x\dd|M|(s)<\infty.
\]
where $m$ denotes Lebesgue measure. Hence Fubini's theorem
applies, and
\[
    \int_\R \varphi(x)(P_y*M)(x)\dd x
    =
    \int_\R (P_y*\varphi)(s)\dd M(s).
\]
The Poisson kernels form an approximate identity, so $ P_y*\varphi\longrightarrow\varphi$. The estimate above provides an $|M|$-integrable bound outside a
compact set, uniformly for $0<y\le1$. Since $|M|$ is finite on compact sets, we may let $y\downarrow0$ to obtain
\[
    \int_\R (P_y*\varphi)(s)\dd M(s)
    \longrightarrow
    \int_\R \varphi(s)\dd M(s)
\]
as claimed. Using the assumption with $z=x+iy$, we obtain, for $x\in\R$ and $y>0$,
\[0
    =
    -\frac{1}{\pi}
    \Im\left(
        k!\int_\R
        \frac{\dd M(s)}{(x+iy-s)^{k+1}}
    \right) =
    (-1)^k
    \int_\R
    \partial_x^kP_y(x-s)\dd M(s)=
    (-1)^k
    \partial_x^k(P_y*M)(x).\]
    where we used
\[
    -\frac{1}{\pi}
    \Im\frac{k!}{(x+iy-s)^{k+1}}
    =
    (-1)^k\partial_x^kP_y(x-s).
\]
The differentiation under the integral sign is justified by the same
dyadic-shell estimate as above. Integration by parts gives
\[
    0
     =
    \int_\R
    \varphi(x)\partial_x^k(P_y*M)(x)\dd x =
    (-1)^k
    \int_\R
    \varphi^{(k)}(x)(P_y*M)(x)\dd x.
\]
Letting $y\downarrow0$ and applying the convergence above with
$\varphi^{(k)}$ in place of $\varphi$, we obtain
\[
    \int_\R \varphi^{(k)}(s)\dd M(s)=0.
\]

By repeated application of \cite[Theorem~1.47]{Legoll22},
there exists a polynomial $p$ of degree at most $k-1$ such that
\[
    \dd M(s)=p(s)\dd s,\quad |M|([-R,R])
    =
    \int_{-R}^R |p(s)|\dd s.
\] 
If $p\not\equiv0$, the right-hand side is not $o(R)$ as
$R\to\infty$, contradicting the assumption on $M$. Hence
$p=0$, and therefore $M=0$.
\end{proof}
\section{Higher-order potentials}\label{sec:ukmu}
For $\mu\in\Pp_{k-1}(\R)$, define
\[
    U_k^\mu(t)
    :=
    \frac{1}{(k-1)!}
    \int_\R (x-t)_+^{k-1}\dd\mu(x)
    =
    \frac{1}{(k-1)!}
    \int_{(t,\infty)}
    (x-t)^{k-1}\dd\mu(x),
    \quad t\in\R,
\]
where $x_+:=\max\{x,0\}.$
The finite $(k-1)$st absolute moment assumption implies that
$U_k^\mu(t)<\infty$ for every $t\in\R$, and dominated convergence shows that $U_k^\mu$ is continuous. 

The $k$-convex order admits the following characterization in terms
of these functions.

\begin{proposition}[{\cite[Theorem~3.2]{DLS98}}]
\label{prop:char}
Let $\mu_1,\mu_2\in\Pp_{k-1}(\R)$. Then $\mu_1\prec_{k\text{-cx}}\mu_2$
if and only if
\[
    \int_\R x^j\d\mu_1(x) = \int_\R x^j\d\mu_2(x),
    \quad j=1,\ldots,k-1,
\]
and
\[
    U_k^{\mu_1}(t)
    \le
    U_k^{\mu_2}(t)
    \quad \text{for all }t\in\R.
\]
\end{proposition}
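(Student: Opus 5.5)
The statement is Proposition~\ref{prop:char}, quoted from \cite[Theorem~3.2]{DLS98}. The proof I would give runs as follows.

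\emph{Necessity.} For $j=1,\dots,k-1$, both $x^j$ and $-x^j$ are $k$-convex: the determinant vanishes because the last row repeats an earlier row. Both integrals are finite since $\mu_i\in\Pp_{k-1}(\R)$, so the two inequalities together give equality of the $j$th moments. For fixed $t$, the function $x\mapsto (x-t)_+^{k-1}/(k-1)!$ is $k$-convex. This follows from the standard fact that $k$-convexity is equivalent to nonnegativity of all divided differences of order $k$. The divided difference of order $k$ of this truncated power is the B-spline $M_k(t;x_0,\dots,x_k)/k$, which is nonnegative. Alternatively, one can approximate $(x-t)_+^{k-1}$ by $C^k$ functions with nonnegative $k$th derivative, obtained by mollification, and use that the determinant condition is closed under pointwise limits. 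Integrability holds by the moment assumption, so $U_k^{\mu_1}(t)\le U_k^{\mu_2}(t)$.

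\emph{Sufficiency.} Let $f$ be $k$-convex with both integrals finite. First assume $f\in C^k$. Taylor's formula with integral remainder around a point $a$ gives
\[
f(x)=\sum_{j=0}^{k-1}\frac{f^{(j)}(a)}{j!}(x-a)^j+\int_a^\infty \frac{(x-t)_+^{k-1}}{(k-1)!}f^{(k)}(t)\dd t+\int_{-\infty}^a \frac{(t-x)_+^{k-1}}{(k-1)!}(-1)^k f^{(k)}(t)\dd t .
\]
The left-tail truncated powers are rewritten using
\[
(t-x)_+^{k-1}(-1)^{k}\cdot(-1)=(x-t)_+^{k-1}-(x-t)^{k-1},
\]
so that the second integral equals $\int_{-\infty}^a[(x-t)_+^{k-1}-(x-t)^{k-1}]f^{(k)}(t)\dd t/(k-1)!$ up to sign bookkeeping. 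Integrating against $\mu_2-\mu_1$ kills every polynomial term, because the moments agree up to order $k-1$. What remains is $\int f^{(k)}(t)\,(U_k^{\mu_2}-U_k^{\mu_1})(t)\dd t\ge0$. Fubini is justified on compact $t$-ranges first. For the tails, one uses the facts that $U_k^{\mu_2}-U_k^{\mu_1}\ge0$ and $f^{(k)}\ge0$, so monotone convergence applies.

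\emph{General case.} For a general $k$-convex $f$, I would use that $k$-convex functions are $C^{k-2}$ with $f^{(k-2)}$ convex. One then approximates $f$ by mollified $f_\varepsilon=f*\rho_\varepsilon$, which are $k$-convex and $C^\infty$. Alternatively, one approximates on compacts by $f$ truncated to agree with its Taylor polynomial outside $[-n,n]$. Then one passes to the limit.

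\emph{Main obstacle.} I expect the main obstacle to be this last limit: integrability of $f$ against $\mu_i$ does not automatically give integrability of $f_\varepsilon$, nor domination. I would handle this by using the representation $f=p+\int (x-t)_+^{k-1}\dd\sigma(t)+\text{(left tail)}$, with $\sigma=f^{(k-1)}{}'\ge0$ a measure, directly in place of mollification. With this representation, the whole sufficiency argument reduces to Tonelli applied to a nonnegative kernel, after subtracting the polynomial part.
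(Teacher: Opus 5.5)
Your argument is correct. The paper gives no proof of this statement; it imports the characterization from \cite[Theorem~3.2]{DLS98}, so your route differs by being a self-contained proof. For necessity you test against the generators $\pm x^j$ ($1\le j\le k-1$) and $(x-t)_+^{k-1}/(k-1)!$. For sufficiency you use the representation
\[
f(x)=P_a(x)+\int_{(a,\infty)}\frac{(x-t)_+^{k-1}}{(k-1)!}\dd\sigma(t)+(-1)^k\int_{(-\infty,a]}\frac{(t-x)_+^{k-1}}{(k-1)!}\dd\sigma(t),
\]
where $\sigma\ge0$ is the Stieltjes measure of the right derivative of the convex function $f^{(k-2)}$ and $P_a$ is a polynomial of degree at most $k-1$. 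This yields $\int_\R f\dd(\mu_2-\mu_1)=\int_\R(U_k^{\mu_2}-U_k^{\mu_1})\dd\sigma\ge0$. It is the same Fubini identity the paper uses in the proof of Proposition~\ref{prop:Heinavaara-order}, where $\sigma=\varphi\dd t$ has compact support, extended to arbitrary $\sigma$.

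The citation keeps the paper short. Your route costs an appeal to the classical regularity theorem ($k$-convex implies $f\in C^{k-2}$ with $f^{(k-2)}$ convex, due to Popoviciu and Boas--Widder), which you should cite explicitly. In return, it shows directly that the characterization holds in exactly the setting used here: measures on all of $\R$ in $\Pp_{k-1}(\R)$, and test functions for which both integrals exist. Once you adopt the measure representation, your $C^k$ step and the mollification detour are unnecessary.

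Two details need tidying:
\begin{itemize}
\item Your displayed identity has a spurious factor $-1$. It should read $(-1)^k(t-x)_+^{k-1}=(x-t)_+^{k-1}-(x-t)^{k-1}$.
\item Do not split off $\int_{-\infty}^a(x-t)^{k-1}\dd\sigma(t)$ as a polynomial in $x$, since it can diverge (take $f(x)=x^k$). Instead, apply Tonelli to each $\mu_i$ separately on the two sign-definite pieces. These give $\int_{\{x\ge a\}}(f-P_a)\dd\mu_i$ and $(-1)^k\int_{\{x<a\}}(f-P_a)\dd\mu_i$, both finite because $f$ and $P_a$ are $\mu_i$-integrable. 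Only then use $\int_\R(x-t)^{k-1}\dd(\mu_2-\mu_1)(x)=0$ for each fixed $t$.
\end{itemize}
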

The preceding characterization also identifies the fourth-order
relation of Hein\"avaara with the $4$-convex order.
\begin{proposition}\label{prop:Heinavaara-order}
Let $\mu,\nu\in\Pp_3(\R)$. Then $\mu\prec_4\nu$ if and only if $\mu\prec_{4\text{-cx}}\nu$.
\end{proposition}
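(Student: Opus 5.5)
The plan is to prove the two implications separately; the direction $\prec_{4\text{-cx}}\Rightarrow\prec_4$ is essentially free. Every $f\in C^4(\R)$ with $f^{(4)}\ge0$ is $4$-convex, because the determinant condition is equivalent to $f^{(4)}\ge0$ for $C^4$ functions. It then remains to check that the integrals against $\mu$ and $\nu$ exist. If $\supp f^{(4)}\subset[-A,A]$, then outside $[-A,A]$ the function $f$ is a cubic polynomial, one on each side. Hence $|f(x)|\le C(1+|x|^3)$, which is integrable since $\mu,\nu\in\Pp_3(\R)$. The definition of $\prec_{4\text{-cx}}$ then gives $\int f\dd\mu\le\int f\dd\nu$.

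For the converse, suppose $\mu\prec_4\nu$. I will verify the two conditions of Proposition~\ref{prop:char} with $k=4$.

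\emph{Moment matching.} The functions $\pm x^j$, $j=1,2,3$, are $C^4$ with fourth derivative $0$, which trivially has compact support. Applying $\prec_4$ to both signs gives $\int x^j\dd\mu=\int x^j\dd\nu$ for $j=1,2,3$.

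\emph{Potential comparison.} I need $U_4^\mu(t)\le U_4^\nu(t)$, where $U_4^\mu(t)=\frac16\int(x-t)_+^3\dd\mu(x)$. The function $g_t(x)=\frac16(x-t)_+^3$ is only $C^2$, so I will mollify it. Let $\rho_\varepsilon$ be a smooth, nonnegative, even mollifier supported in $[-\varepsilon,\varepsilon]$, and set $g_{t,\varepsilon}:=g_t*\rho_\varepsilon$. This function is $C^\infty$. Its fourth derivative is $\rho_\varepsilon(\cdot-t)\ge0$, since distributionally $g_t^{(4)}=\delta_t$, and this is supported in $[t-\varepsilon,t+\varepsilon]$, which is compact. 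So $\prec_4$ gives
\[
\int g_{t,\varepsilon}\dd\mu\le\int g_{t,\varepsilon}\dd\nu .
\]
To let $\varepsilon\downarrow0$, note that $g_{t,\varepsilon}\to g_t$ pointwise. Moreover, for $\varepsilon\le1$,
\[
|g_{t,\varepsilon}(x)|\le \tfrac16\bigl(|x-t|+1\bigr)^3,
\]
which is $\mu$- and $\nu$-integrable because both measures lie in $\Pp_3(\R)$. Dominated convergence therefore yields $U_4^\mu(t)\le U_4^\nu(t)$ for every $t\in\R$.

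Proposition~\ref{prop:char} with $k=4$ now gives $\mu\prec_{4\text{-cx}}\nu$. The only point requiring any care is the mollification, namely confirming that $g_{t,\varepsilon}^{(4)}=\rho_\varepsilon(\cdot-t)$. This is immediate from the identity $\frac{d^4}{dx^4}\frac16(x-t)_+^3=\delta_t$ in distributions. I expect no genuine obstacle.
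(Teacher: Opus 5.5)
Your proof is correct and follows the paper's argument: you match moments via $\pm x^j$, then test against a smooth function whose fourth derivative is nonnegative and compactly supported. Indeed, your $g_{t,\varepsilon}$ is exactly the paper's $f_\varphi$ with $\varphi=\rho_\varepsilon(\cdot-t)$. The only difference is cosmetic: you let $\varepsilon\downarrow0$ by dominated convergence, whereas the paper uses Fubini to get $\int\varphi\,(U_4^\nu-U_4^\mu)\dd t\ge0$ for all nonnegative $\varphi\in C_c^\infty(\R)$ and then concludes by continuity. Your explicit integrability check in the easy direction is a welcome detail that the paper leaves implicit.
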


\begin{proof}[\sc Proof.] The ``if'' part is immediate, since every function admissible in the
definition of $\prec_4$ satisfies $f^{(4)}\ge0$ and is therefore
$4$-convex.

Conversely, suppose that $\mu\prec_4\nu$. Every polynomial $p$ of
degree at most three, as well as $-p$, is admissible in the definition of $\prec_4$, since $p^{(4)}=0$. Hence
\[
    \int_\R x^j\dd\mu(x)
    =
    \int_\R x^j\dd\nu(x),
    \quad j=1,2,3.
\]

Let $\varphi\in C_c^\infty(\R)$ be nonnegative and define
\[
    f_\varphi(x)
    :=
    \frac{1}{3!}
    \int_{-\infty}^x (x-t)^3\varphi(t)\dd t.
\]
Then $f_\varphi\in C^4(\R)$ and $f_\varphi^{(4)}=\varphi\ge0$, so $f_\varphi$ is admissible.
By Fubini's theorem,
\[0\le\int_\R f_\varphi(x)\dd(\nu-\mu)(x)=\int_\R
    \varphi(t)
    (
        U_4^\nu(t)-U_4^\mu(t)
    )
    \dd t .\]
Since this holds for every nonnegative
$\varphi\in C_c^\infty(\R)$ and
$U_4^\nu-U_4^\mu$ is continuous, we obtain
\[
    U_4^\mu(t)\le U_4^\nu(t)
    \quad \text{for all }t\in\R.
\]
Proposition~\ref{prop:char} therefore yields $\mu\prec_{4\text{-cx}}\nu$.
\end{proof}

We next record two basic properties of the potentials $U_k^\mu$.
\begin{lemma}\label{lem:C0}
Let $\mu_1,\mu_2\in\Pp_{k-1}(\R)$ have matching moments through
order $k-1$. Then
\[
    U_k^{\mu_2}-U_k^{\mu_1}\in C_0(\R):=\{f:\R\to\R\text{ is continuous}:f(t)\longrightarrow 0\text{ as $|t|\to\infty$}\}.
\]
\end{lemma}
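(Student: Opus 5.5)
Continuity of $D:=U_k^{\mu_2}-U_k^{\mu_1}$ is already known, since each $U_k^{\mu_i}$ is continuous. It therefore remains to show that $D(t)\to0$ as $t\to+\infty$ and as $t\to-\infty$. We treat the two ends separately. The behaviour at $-\infty$ is where the matching of moments enters.

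\emph{Limit at $+\infty$.} For $t\ge0$ we have
\[
0\le U_k^{\mu_i}(t)\le \frac{1}{(k-1)!}\int_{(t,\infty)}|x|^{k-1}\dd\mu_i(x).
\]
This is a tail of the convergent integral $\int|x|^{k-1}\dd\mu_i$, so it tends to $0$ as $t\to+\infty$ by dominated convergence. Hence each potential vanishes at $+\infty$ separately, and so does $D$.

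\emph{Limit at $-\infty$.} We use the elementary identity $(x-t)_+^{k-1}=(x-t)^{k-1}-(x-t)_-^{k-1}\,(-1)^{k-1}$, that is,
\[
(x-t)_+^{k-1}=(x-t)^{k-1}+(-1)^{k}(t-x)_+^{k-1}.
\]
Integrating against $\mu_i$ gives
\[
(k-1)!\,U_k^{\mu_i}(t)=\int(x-t)^{k-1}\dd\mu_i(x)+(-1)^k\int(t-x)_+^{k-1}\dd\mu_i(x).
\]
Expand $(x-t)^{k-1}$ binomially. The first integral is then a polynomial in $t$ whose coefficients involve only the moments of order $0,\dots,k-1$. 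These moments are finite because $\mu_i\in\Pp_{k-1}(\R)$, and they coincide for $i=1,2$ by hypothesis (the zeroth moment is $1$ for both). So the polynomial parts cancel in $D$, and
\[
(k-1)!\,D(t)=(-1)^k\Bigl(\int_{(-\infty,t)}(t-x)^{k-1}\dd\mu_2-\int_{(-\infty,t)}(t-x)^{k-1}\dd\mu_1\Bigr).
\]
For $t\le0$ and $x<t$ we have $0\le t-x\le|x|$. Each integral is therefore bounded by $\int_{(-\infty,t)}|x|^{k-1}\dd\mu_i$, which tends to $0$ as $t\to-\infty$, again by dominated convergence.

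The only point needing care is this step at $-\infty$. There each $U_k^{\mu_i}$ grows like a polynomial of degree $k-1$, and the argument works only because the polynomial parts cancel through the moment hypothesis. The identity used is exact and involves only finite integrals, so no further obstacle arises. This establishes $D\in C_0(\R)$.
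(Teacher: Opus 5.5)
Your proof is correct and follows essentially the same route as the paper. At $+\infty$ you bound by the tail $\int_{(t,\infty)}|x|^{k-1}\dd\mu_i(x)$, and at $-\infty$ your identity $(x-t)_+^{k-1}=(x-t)^{k-1}+(-1)^k(t-x)_+^{k-1}$, combined with the matching moments, is exactly the paper's step of trading $\int_{(t,\infty)}(x-t)^{k-1}\dd(\mu_2-\mu_1)(x)$ for the integral over $(-\infty,t]$ and then bounding by $\int_{(-\infty,t]}|x|^{k-1}\dd(\mu_1+\mu_2)(x)$. The only cosmetic difference is that at $+\infty$ you treat each potential separately, whereas the paper bounds the difference directly.
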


\begin{proof}[\sc Proof.]
Since both $U_k^{\mu_1}$ and $U_k^{\mu_2}$ are continuous, it remains only to verify their decay at infinity. As $t\to+\infty$, the integrability of $|x|^{k-1}$
with respect to $\mu_1+\mu_2$ yields
\begin{align*}
    |U_k^{\mu_2}(t)-U_k^{\mu_1}(t)|
    &\le
    \frac{1}{(k-1)!}
    \int_{(t,\infty)}
    |x-t|^{k-1}\dd(\mu_1+\mu_2)(x) \\
    &\le
    \frac{1}{(k-1)!}
    \int_{(t,\infty)}
    |x|^{k-1}\dd(\mu_1+\mu_2)(x)
    \longrightarrow0,
\end{align*}
where we used $|x-t|\le |x|$ for $x>t>0$. Then, as $t\to-\infty$,
\begin{align*}
    |U_k^{\mu_2}(t)-U_k^{\mu_1}(t)|
    &=
    \frac{1}{(k-1)!}
    \left|
        \int_{(t,\infty)}
        (x-t)^{k-1}\dd(\mu_2-\mu_1)(x)
    \right| \\
    &=
    \frac{1}{(k-1)!}
    \left|
        \int_{(-\infty,t]}
        (x-t)^{k-1}\dd(\mu_2-\mu_1)(x)
    \right| \\
    &\le
    \frac{1}{(k-1)!}
    \int_{(-\infty,t]}
    |x-t|^{k-1}\dd(\mu_1+\mu_2)(x) \\
    &\le
    \frac{1}{(k-1)!}
    \int_{(-\infty,t]}
    |x|^{k-1}\dd(\mu_1+\mu_2)(x)
    \longrightarrow0,
\end{align*}
where the second equality follows from the matching moments, and
we used $|x-t|\le |x|$ for $x\le t<0$. The final convergence again
follows from the integrability of $|x|^{k-1}$.
\end{proof}

\begin{lemma}\label{lem:UC}
Let $\mu\in\Pp_{k-1}(\R)$. Then
\[
    G_\mu(z)
    =
    k!\int_\R
    \frac{U_k^\mu(t)}{(z-t)^{k+1}}
    \dd t,\quad z\in\C^+.
\]
\end{lemma}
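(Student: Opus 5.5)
The plan is to substitute the definition of $U_k^\mu$, exchange the order of integration with Fubini--Tonelli, and then compute the resulting inner integral in $t$ explicitly. Fix $z\in\Cp$. Inserting $U_k^\mu(t)=\frac{1}{(k-1)!}\int_\R (x-t)_+^{k-1}\dd\mu(x)$ gives
\[
    k!\int_\R\frac{U_k^\mu(t)}{(z-t)^{k+1}}\dd t
    =
    k\int_\R\int_{-\infty}^{x}\frac{(x-t)^{k-1}}{(z-t)^{k+1}}\dd t\dd\mu(x),
\]
provided the interchange is legitimate.

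\textbf{Justifying the interchange.} This is the main (though mild) obstacle. We need
\[
    \int_\R\int_{-\infty}^x\frac{(x-t)^{k-1}}{|z-t|^{k+1}}\dd t\dd\mu(x)<\infty.
\]
The substitution $u=x-t$ turns the inner integral into $\int_0^\infty u^{k-1}|z-x+u|^{-k-1}\dd u$, with $w:=z-x\in\Cp$ and $\Im w=\Im z=:y$.

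\begin{itemize}
\item On $u\le 2|w|$, bound $|w+u|\ge y$. This contributes at most $(2|w|)^k/(k\,y^{k+1})$.
\item On $u>2|w|$, use $|w+u|\ge u/2$. This contributes at most $2^{k+1}\int_{2|w|}^\infty u^{-2}\dd u\lesssim 1/|w|$, which is bounded by $1/y$.
\end{itemize}

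Hence the inner integral is $O\bigl((1+|x|)^k\bigr)$. This only uses a $k$th moment, which is one more than we have, so the crude bound must be sharpened.

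The sharper bound goes as follows. Since $u^{k-1}\le |w+u|^{k-1}+$ lower-order terms is not directly helpful, we instead split at $u\le 2|w|$ and, on that piece, use $|w+u|\ge \max(y,|u-|\Re w||)$-type bounds. Integrating $u^{k-1}/\bigl(y^2+(u-a)^2\bigr)^{(k+1)/2}$ with $a=\Re w$ gives $O\bigl(a^{k-1}/y^{k}\bigr)+O(1)$, because the peak near $u=a$ has width $y$ and height $a^{k-1}/y^{k+1}$. Therefore the inner integral is $O\bigl((1+|x|)^{k-1}\bigr)$ with constants depending on $z$, and $\mu\in\Pp_{k-1}(\R)$ makes the double integral finite. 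Fubini then applies.

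\textbf{Evaluating the inner integral.} It remains to show that for every $w\in\Cp$,
\[
    k\int_0^\infty\frac{u^{k-1}}{(w+u)^{k+1}}\dd u=\frac1w.
\]
Note that $\frac{d}{du}\bigl(\frac{u}{w+u}\bigr)^k=k\frac{u^{k-1}}{(w+u)^{k-1}}\cdot\frac{w}{(w+u)^2}$. The integrand is therefore $\frac1w\frac{d}{du}\bigl(\frac{u}{u+w}\bigr)^k$. Its integral over $(0,\infty)$ equals $\frac1w(1-0)=\frac1w$, since $u/(u+w)\to1$ as $u\to\infty$ and vanishes at $u=0$.

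With $w=z-x$, the right-hand side becomes $\int_\R\frac{\dd\mu(x)}{z-x}=G_\mu(z)$, which proves the lemma.
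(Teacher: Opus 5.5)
Your proof is correct and follows the paper's route: insert the definition of $U_k^\mu$, justify Fubini, substitute $u=x-t$, and integrate using the antiderivative $\frac1w\bigl(\frac{u}{u+w}\bigr)^k$ with $w=z-x$. The only difference is the integrability check. The paper uses $0<x-t\le|x|+|t|$, hence $(x-t)^{k-1}\le 2^{k-2}(|x|^{k-1}+|t|^{k-1})$. This is essentially the triangle-inequality bound ($u\le|w+u|+|w|$) that you dismissed as not directly helpful, yet it gives the $O\bigl((1+|x|)^{k-1}\bigr)$ estimate in one line and avoids your heuristic peak analysis (where, incidentally, $a$ should be $|\Re w|$ rather than $\Re w$).
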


\begin{proof}[\sc Proof.]
Fix $z\in\Cp$. Since
$0< x-t\le |x|+|t|$ on $\{(x,t)\in\R^2:t<x\}$, Tonelli's theorem gives
\[
\begin{aligned}
&\int_{\{(x,t)\in\R^2:t<x\}}
\frac{(x-t)^{k-1}}{|z-t|^{k+1}}
\dd(\mu\otimes m)(x,t)\\
&\quad=
\int_\R\int_{-\infty}^x
\frac{(x-t)^{k-1}}{|z-t|^{k+1}}
\dd t\dd\mu(x) \\
&\quad\le
2^{k-2}
\int_\R\int_{-\infty}^x
\frac{|x|^{k-1}+|t|^{k-1}}
{|z-t|^{k+1}}
\dd t\dd\mu(x) \\
&\quad\le
2^{k-2}
\left[
\left(\int_\R |x|^{k-1}\dd\mu(x)\right)
\left(\int_\R\frac{\dd t}{|z-t|^{k+1}}\right)
+
\int_\R
\frac{|t|^{k-1}}{|z-t|^{k+1}}\dd t
\right]
<\infty.
\end{aligned}
\]
where $m$ denotes Lebesgue measure on $\R$. Hence Fubini's theorem applies, and 
\[
\begin{aligned}
    k!\int_\R
    \frac{U_k^\mu(t)}{(z-t)^{k+1}}
    \dd t
    &=
    \int_\R\int_t^\infty
    \frac{k(x-t)^{k-1}}{(z-t)^{k+1}}
    \dd\mu(x)\dd t \\
    &=
    \int_\R
    \left(
        k\int_{-\infty}^x
        \frac{(x-t)^{k-1}}{(z-t)^{k+1}}
        \dd t
    \right)\dd\mu(x) \\
    &=
    \int_\R
    \left(
        k\int_0^\infty
        \frac{u^{k-1}}{(z-x+u)^{k+1}}
        \dd u
    \right)\dd\mu(x) \quad (\text{change of variables
$u=x-t$}) \\
    &=
    \int_\R
    \frac{1}{z-x}
    \left[
        \left(
            \frac{u}{z-x+u}
        \right)^k
    \right]_{u=0}^{u=\infty}
    \dd\mu(x) \\
    &=
    \int_\R\frac{\dd\mu(x)}{z-x} =
    G_\mu(z),
\end{aligned}
\]
as claimed.
\end{proof}
\section{Positive subordination kernels}
\label{sec:subo}
In this section, we use analytic subordination to construct a family
of probability measures that will serve as positive kernels in the
proof of the main theorem.

Let
\[
    \lambda_r=(1-r)\mu_1+r\mu_2,
    \quad 0\le r\le1,
\]
and let $\omega_r:\Cp\to\Cp$ denote the subordination function such that
\[
    F_{\lambda_r\boxplus\nu}(z)
    =
    F_{\lambda_r}(\omega_r(z)),
    \quad z\in\Cp.
\]
The paths $r\mapsto\lambda_r$ and $r\mapsto\nu$ are differentiable,
with constant derivatives $\mu_2-\mu_1$ and $0$, respectively.
Hence \cite[Theorem~26]{BS12} gives
\[\partial_r G_{\lambda_r\boxplus\nu}(z)
    =
    \omega_r'(z)
    (
        G_{\mu_2}(\omega_r(z))
        -
        G_{\mu_1}(\omega_r(z))
    ).\]
Since $\lambda_0=\mu_1$ and $\lambda_1=\mu_2$, integrating over
$r\in[0,1]$ gives
\begin{equation}\label{eq:integrated-subordination}
    G_{\mu_2\boxplus\nu}(z)
    -
    G_{\mu_1\boxplus\nu}(z)
    =
    \int_0^1
    \omega_r'(z)
    (
        G_{\mu_2}(\omega_r(z))
        -
        G_{\mu_1}(\omega_r(z))
    )
    \dd r .
\end{equation}
Using Lemma~\ref{lem:UC} in
\eqref{eq:integrated-subordination}, we obtain
\[G_{\mu_2\boxplus\nu}(z)
    -
    G_{\mu_1\boxplus\nu}(z)
    =
    k!\int_0^1\int_\R
    (
        U_k^{\mu_2}(t)-U_k^{\mu_1}(t)
    )
    \frac{\omega_r'(z)}
         {(\omega_r(z)-t)^{k+1}}
    \dd t\dd r .\]

We then represent the kernel
\[
    \frac{\omega_r'(z)}
         {(\omega_r(z)-t)^{k+1}}
\]
as the $(k+1)$st-order Cauchy transform of a probability measure. For each fixed $r\in[0,1]$,
\cite[Theorem~3.1]{Biane98} gives a family of probability measures
$\{\rho_{r,t}:t\in\R\}$ satisfying
\begin{equation}\label{eq:subordination-measure}
    G_{\rho_{r,t}}(z)
    =
    \frac{1}{\omega_r(z)-t},
    \quad z\in\Cp.
\end{equation}
Differentiating \eqref{eq:subordination-measure} yields
\begin{equation}\label{eq:kernel-target-biane}
    \frac{\omega_r'(z)}
         {(\omega_r(z)-t)^{k+1}}
    =
    -G_{\rho_{r,t}}(z)^{k-1}
     G_{\rho_{r,t}}'(z).
\end{equation}

Now let $X_1,\ldots,X_k$ be independent random variables with common law $\rho_{r,t}$, and let
\[
    (\Theta_1,\ldots,\Theta_k)
    \sim \operatorname{Dirichlet}(1,\ldots,1)
\]
be independent of $X_1,\ldots,X_k$. Define
\[
    K_{r,t}^{(k)}
    :=
    \Law\left(
        \sum_{j=1}^k\Theta_jX_j
    \right).
\]

For fixed $x_1,\ldots,x_k\in\R$, the Hermite--Genocchi formula,
applied to $f_z(x)=(z-x)^{-1}$, gives
\[ \prod_{j=1}^k\frac{1}{z-x_j}
    =
    \mathbb E_\Theta
    \left[
        \frac{1}
        {\left(
            z-\sum_{j=1}^k\Theta_jx_j
        \right)^k}
    \right];\]
see \cite[Appendix~A, Equations.~(A.2)--(A.3)]{Lesch17}.
Taking expectation with respect to $X_1,\ldots,X_k$ gives
\[
    G_{\rho_{r,t}}(z)^k
    =
    \int_\R
    \frac{\dd K_{r,t}^{(k)}(s)}
         {(z-s)^k}.
\]
Differentiating both sides with respect to $z$ and cancelling the
common factor $-k$, we obtain
\begin{equation}\label{eq:UG-Cauchy-derivative}
    -G_{\rho_{r,t}}(z)^{k-1}
     G_{\rho_{r,t}}'(z)
    =
    \int_\R
    \frac{\dd K_{r,t}^{(k)}(s)}
         {(z-s)^{k+1}}.
\end{equation}
Combining \eqref{eq:kernel-target-biane} and
\eqref{eq:UG-Cauchy-derivative} yields
\begin{equation}\label{eq:higher-order-kernel-transform}
    \frac{\omega_r'(z)}
         {(\omega_r(z)-t)^{k+1}}
    =
    \int_\R
    \frac{\dd K_{r,t}^{(k)}(s)}
         {(z-s)^{k+1}},
    \quad z\in\Cp.
\end{equation}
We next verify that
$\{K_{r,t}^{(k)}\}_{(r,t)\in[0,1]\times\R}$
forms a Borel probability kernel.

\begin{lemma}\label{lem:mea}
For every bounded continuous function $f:\R\to\C$, the map
\[
    (r,t)
    \longmapsto
    \int_\R f(s)\dd K_{r,t}^{(k)}(s)
\]
is continuous on $[0,1]\times\R$. In particular,
$(r,t)\mapsto K_{r,t}^{(k)}(A)$ is Borel measurable for every
Borel set $A\subset\R$.
\end{lemma}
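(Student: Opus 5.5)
We need continuity of $(r,t)\mapsto\int f\,dK^{(k)}_{r,t}$. The plan is to reduce it to weak continuity of $(r,t)\mapsto\rho_{r,t}$, because $K^{(k)}_{r,t}$ is the law of a continuous function of $(\Theta,X_1,\dots,X_k)$. Explicitly, with $\Delta$ the simplex and $\mathbb E_\Theta$ the Dirichlet expectation,
\[
\int_\R f\dd K^{(k)}_{r,t}
=\int_{\R^k}\mathbb E_\Theta\Big[f\Big(\sum_j\Theta_jx_j\Big)\Big]\dd\rho_{r,t}^{\otimes k}(x).
\]
The inner function $g(x):=\mathbb E_\Theta[f(\sum_j\Theta_jx_j)]$ is bounded and continuous on $\R^k$ by dominated convergence. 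Weak convergence $\rho_{r_n,t_n}\to\rho_{r,t}$ implies weak convergence of the $k$-fold product measures, so $\int g\dd\rho_{r_n,t_n}^{\otimes k}\to\int g\dd\rho_{r,t}^{\otimes k}$. Measurability then follows by the standard monotone class argument. Open sets $A$ are pointwise increasing limits of bounded continuous $f_m\uparrow 1_A$, so $K_{r,t}(A)$ is a pointwise limit of continuous functions. The class of sets $A$ with $(r,t)\mapsto K_{r,t}(A)$ Borel is a Dynkin system containing the $\pi$-system of open sets, hence contains all Borel sets.

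The main step is weak continuity of $(r,t)\mapsto\rho_{r,t}$. By \eqref{eq:subordination-measure}, $G_{\rho_{r,t}}(z)=1/(\omega_r(z)-t)$. Since all $\rho_{r,t}$ are probability measures, pointwise convergence of Cauchy transforms on $\Cp$ implies weak convergence: vague convergence follows from Stieltjes inversion, and there is no mass loss because the limit is a probability measure. So it suffices to show that $(r,t)\mapsto\omega_r(z)$ is continuous for each fixed $z\in\Cp$. Continuity in $t$ is trivial, and $\omega_r(z)-t\in\Cp$ never vanishes.

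To get continuity in $r$, observe that $r\mapsto\lambda_r$ is continuous in total variation, so $\lambda_{r_n}\boxplus\nu\to\lambda_r\boxplus\nu$ weakly by continuity of $\boxplus$ under weak convergence (\cite{BV93}). The subordination functions $\omega_{r_n}$ map $\Cp$ into $\Cp$ with $\Im\omega_{r_n}\ge\Im z$, so they form a normal family. Any subsequential limit $\omega$ is analytic, satisfies $\Im\omega(z)\ge\Im z$, and is nonconstant, hence maps into $\Cp$.

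Passing to the limit in the subordination relations requires uniform convergence of $F_{\lambda_{r_n}}\to F_{\lambda_r}$ on compacta. This holds since $G_{\lambda_{r_n}}-G_{\lambda_r}=(r_n-r)(G_{\mu_2}-G_{\mu_1})$. Similarly $F_{\lambda_{r_n}\boxplus\nu}\to F_{\lambda_r\boxplus\nu}$. Together with the companion function $\omega_{\nu,r_n}$, handled through \eqref{eq:subordination-sum}, the limit pair satisfies the defining equations of \cite[Theorem~4.1]{BB07}.

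The main obstacle is identifying the limit with $\omega_r$. The nontangential normalization $\omega(z)/z\to1$ is not obviously preserved under limits. I would avoid it using the finite-moment setting: $\lambda_r,\nu\in\Pp_2$ with uniformly bounded variances. Via \eqref{eq:subordination-sum} and \eqref{eq:finite-variance-bound} one gets a uniform bound $|\omega_{r_n}(z)-z+c_n|\le C/\Im z$, and this passes to the limit. Alternatively, uniqueness follows from the Denjoy--Wolff fixed-point characterization of $\omega_r(z)$ as the unique attracting fixed point of an analytic self-map of $\Cp$ (Belinschi--Bercovici), which is stable under locally uniform convergence. Either route shows that every subsequential limit equals $\omega_r$, so $\omega_{r_n}\to\omega_r$ pointwise, and the lemma follows.
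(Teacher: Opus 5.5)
Your proposal is correct, and its outer layer is the paper's. The averaged function $g=T_kf$, weak convergence of the $k$-fold products, and the reduction of weak continuity of $\rho_{r,t}$ to pointwise convergence of $1/(\omega_r(z)-t)$ all appear there in the same form. Your Dynkin-system argument for measurability replaces the paper's appeal to Borel measurability of $\mu\mapsto\mu(A)$ on $\Pp_0(\R)$, and works equally well. You differ at the key step $\omega_{r_n}(z)\to\omega_r(z)$. The paper gets it in one line from the Belinschi--Bercovici fixed-point characterization plus the Belinschi--Bercovici--Ho continuity theorem for Denjoy--Wolff points, which uses no moments. You instead run a normal-families argument and identify the subsequential limits. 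Your route is more self-contained; the paper's is shorter and more general, at the price of citing a deeper external theorem.

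Two points in your version need fixing.

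First, Montel only gives subsequences converging locally uniformly to an analytic function \emph{or to $\infty$} (think of $z\mapsto z+n$). What excludes $\infty$ is your bound $|\omega_{r_n}(z)-z+\int_\R x\dd\nu(x)|\le\Var(\nu)/\Im z$, and you should say so explicitly. That bound comes from $\omega_r(z)-z=F_\nu(\widetilde\omega_r(z))-\widetilde\omega_r(z)$ and uses only $\nu\in\Pp_2(\R)$. Your assertion that $\lambda_r\in\Pp_2(\R)$ is therefore unnecessary, and it is false for $k=2$, where only $\mu_1,\mu_2\in\Pp_1(\R)$ is assumed.

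Second, the normalization at infinity is not the obstacle you fear. Take any limit pair $(\omega,\widetilde\omega)$ of self-maps of $\Cp$ satisfying the subordination identities. Then $\omega(z)$ is a fixed point in $\Cp$ of $w\mapsto F_\nu(F_{\lambda_r}(w)-w+z)-F_{\lambda_r}(w)+w$. This map sends $\Cp$ into $\{\Im w\ge\Im z\}$, so it is not the identity, and by Schwarz--Pick it has at most one fixed point. Since $\omega_r(z)$ is also such a fixed point, the limit is $\omega_r(z)$.

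You can also exclude $\infty$ without moments, using tightness of $\{\lambda_r\}$. If $|w_n|\to\infty$ with $\Im w_n\ge\Im z$, then $F_{\lambda_{r_n}}(w_n)\to\infty$. This contradicts $F_{\lambda_{r_n}}(\omega_{r_n}(z))=F_{\lambda_{r_n}\boxp\nu}(z)\to F_{\lambda_r\boxp\nu}(z)$. With these two observations your argument becomes moment-free, matching the generality of the paper's.
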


\begin{proof}[\sc Proof.]
Let $(r_n,t_n)\to(r,t)$. By the fixed-point characterization of the subordination functions
\cite[Theorem~3.2]{BB07} and the continuity theorem for
Denjoy--Wolff points \cite[Theorem~1.1]{BBH22}, $\omega_{r_n}(z)\longrightarrow\omega_r(z)$ for all $z\in\Cp$.
Hence
\[
    G_{\rho_{r_n,t_n}}(z)
    =
    \frac{1}{\omega_{r_n}(z)-t_n}
    \longrightarrow
    \frac{1}{\omega_r(z)-t}
    =
    G_{\rho_{r,t}}(z),
    \quad z\in\Cp.
\]
Therefore $\rho_{r_n,t_n}\Rightarrow\rho_{r,t}$ by \cite[Theorem~2.5]{Maa92}.

For $f\in C_b(\R)$, define
\[
    T_kf(x_1,\ldots,x_k)
    :=
    \mathbb E_\Theta
    f\left(\sum_{j=1}^k\Theta_jx_j\right).
\]
Then $T_kf\in C_b(\R^k)$, and by the definition of
$K_{r,t}^{(k)}$,
\[
    \int_\R f(s)\dd K_{r,t}^{(k)}(s)
    =
    \int_{\R^k}
    T_kf(x_1,\ldots,x_k)\,
    \dd\rho_{r,t}^{\otimes k}(x_1,\ldots,x_k).
\]
Since weak convergence is preserved under finite products, $\rho_{r_n,t_n}^{\otimes k}
    \Rightarrow
    \rho_{r,t}^{\otimes k}$, and therefore
\[
    \int_\R f(s)\dd K_{r_n,t_n}^{(k)}(s)
    \longrightarrow
    \int_\R f(s)\dd K_{r,t}^{(k)}(s).
\]
This proves the first assertion. The second follows because, for every
Borel set $A\subset\R$, the map $\mu\longmapsto\mu(A)$ is Borel measurable on $\Pp_0(\R)$ equipped with the weak topology.
\end{proof}

\begin{lemma}\label{lem:higher-order-kernel-localization} Let $\nu\in\Pp_{2}(\R)$. Set $v_k:=2\Var(\nu)/(k+1).$ For every $R>0$,
\[
    \sup_{r\in[0,1]}
    \int_\R K_{r,t}^{(k)}([-R,R])\dd t
    \le
    2R+4\sqrt{v_k}.
\]
\end{lemma}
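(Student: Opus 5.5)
The plan is to control the mean and variance of $K_{r,t}^{(k)}$ uniformly in $r$, and then integrate a Chebyshev tail bound in $t$. The one structural fact we need is that each $\rho_{r,t}$ has finite variance bounded by $\Var(\nu)$ and mean $t+m$, where $m$ is the mean of $\nu$.

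\emph{Step 1: moments of $\rho_{r,t}$.} By \eqref{eq:subordination-measure}, $F_{\rho_{r,t}}(z)=\omega_r(z)-t$. Let $\omega_\nu^{(r)}$ be the companion subordination function. By \eqref{eq:subordination-sum} and $F_{\lambda_r\boxp\nu}=F_\nu\circ\omega_\nu^{(r)}$,
\[
\omega_r(z)-z=F_\nu(\omega_\nu^{(r)}(z))-\omega_\nu^{(r)}(z).
\]
Apply \eqref{eq:finite-variance-bound} at $w=\omega_\nu^{(r)}(z)$ and use $\Im w\ge\Im z$ from \eqref{eq:subordination-normalization}. This gives
\[
|F_{\rho_{r,t}}(z)-z+(t+m)|=|\omega_r(z)-z+m|\le \frac{\Var(\nu)}{\Im z}.
\]
The converse statement after \eqref{eq:finite-variance-bound} then yields $\rho_{r,t}\in\Pp_2(\R)$, with mean $t+m$ and $\Var(\rho_{r,t})\le\Var(\nu)$, uniformly in $(r,t)$.

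\emph{Step 2: moments of $K_{r,t}^{(k)}$.} Since $\sum_j\Theta_j=1$, we can write $\sum_j\Theta_jX_j-(t+m)=\sum_j\Theta_j(X_j-t-m)$. Here the $X_j-t-m$ are independent, centered, and independent of $\Theta$. Hence $K_{r,t}^{(k)}$ has mean $t+m$, and its variance equals $\sum_j\mathbb E[\Theta_j^2]\Var(\rho_{r,t})$. For the Dirichlet$(1,\ldots,1)$ law, $\mathbb E[\Theta_j^2]=2/(k(k+1))$, so $\sum_j\mathbb E[\Theta_j^2]=2/(k+1)$ and the variance is at most $v_k$.

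\emph{Step 3: integration in $t$.} Put $d(t):=\operatorname{dist}(t+m,[-R,R])$. Chebyshev's inequality gives
\[
K_{r,t}^{(k)}([-R,R])\le \min\{1,\,v_k/d(t)^2\}.
\]
Fix a threshold $a>0$ and split the $t$-integral into two regions. On $\{d(t)\le a\}$, a set of Lebesgue measure $2R+2a$, we bound the integrand by $1$. On $\{d(t)>a\}$ we use $v_k/d(t)^2$; this region has two tails, each contributing $\int_a^\infty v_k u^{-2}\,\dd u=v_k/a$. The total bound is $2R+2a+2v_k/a$. Choosing $a=\sqrt{v_k}$ gives $2R+4\sqrt{v_k}$ for every $r$, hence for the supremum. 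Measurability of $t\mapsto K_{r,t}^{(k)}([-R,R])$ comes from Lemma~\ref{lem:mea}. If $v_k=0$, we let $a\downarrow0$ instead.

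The main obstacle is Step 1, namely the uniform variance bound for $\rho_{r,t}$. It hinges on recognizing $\omega_r(z)-z+m$ as $F_\nu-\mathrm{id}+m$ evaluated at the companion subordination function, and on the inequality $\Im\omega_\nu^{(r)}\ge\Im z$. Once that bound is in place, the remaining steps are elementary.
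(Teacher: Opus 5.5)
Your proposal is correct and follows the paper's proof essentially step for step. It uses the same bound on $\omega_r(z)-z+m$ via the companion subordination function and the converse of \eqref{eq:finite-variance-bound}, the same Dirichlet second-moment computation giving variance at most $v_k$, and the same Chebyshev-plus-trivial-bound integration in $t$. The only cosmetic differences are that you optimize a free threshold $a$ (recovering the paper's choice $a=\sqrt{v_k}$), and that you handle $v_k=0$ by letting $a\downarrow0$ rather than observing that $K_{r,t}^{(k)}$ is then a Dirac mass.
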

\begin{proof}[\sc Proof.] We first determine the mean and bound the variance of
$K_{r,t}^{(k)}$.

Let $\widetilde\omega_r$ denote the subordination function such that $F_{\lambda_r\boxplus\nu}(z)
    =
    F_\nu(\widetilde\omega_r(z))$ for all $z\in\Cp$.
Together with \eqref{eq:subordination-sum}, this gives $\omega_r(z)-z
    =
    F_\nu(\widetilde\omega_r(z))
    -
    \widetilde\omega_r(z).$
Hence, by \eqref{eq:subordination-normalization} and \eqref{eq:finite-variance-bound},
\[
    \left|
        \omega_r(z)-z+\int_\R x\dd\nu(x)
    \right|
    \le
    \frac{\Var(\nu)}{\Im z},
    \quad z\in\C^+.
\]
By \eqref{eq:subordination-measure}, $F_{\rho_{r,t}}(z)=\omega_r(z)-t$ and therefore
\[
    \left|
        F_{\rho_{r,t}}(z)-z
        +t+\int_\R x\dd\nu(x)
    \right|
    \le
    \frac{\Var(\nu)}{\Im z}.
\]
By the converse part of \eqref{eq:finite-variance-bound},
\begin{equation}\label{eq:mv}
    \int_\R x\dd\rho_{r,t}(x)
    =
    t+\int_\R x\dd\nu(x)
    \quad\text{and}\quad
    \Var(\rho_{r,t})
    \le
    \Var(\nu).
\end{equation}
Let
\[
    S=\sum_{j=1}^k\Theta_jX_j,
\]
so that $K_{r,t}^{(k)}=\Law(S)$. By \eqref{eq:mv}, conditionally on
$\Theta=(\Theta_1,\ldots,\Theta_k)$,
\[
     \mathbb E(S\mid\Theta)
    =
    t+\int_\R x\dd\nu(x)\quad\text{and}\quad \Var(S\mid\Theta)
    =
    \Var(\rho_{r,t})
    \sum_{j=1}^k\Theta_j^2.
\]
Using $K_{r,t}^{(k)}=\Law(S)$ and the conditional formulas above, we obtain
\[
\begin{aligned}
    \int_\R s\dd K_{r,t}^{(k)}(s)
    &=
    \mathbb E S
    =
    \mathbb E[\mathbb E(S\mid\Theta)]
    =
    t+\int_\R x\dd\nu(x), \\
    \Var(K_{r,t}^{(k)})
    &=
    \Var(S)
    =
    \mathbb E[\Var(S\mid\Theta)] =
    \Var(\rho_{r,t})
    \mathbb E\left[\sum_{j=1}^k\Theta_j^2\right]
    =
    \frac{2}{k+1}\Var(\rho_{r,t})
    \le v_k,
\end{aligned}
\]
where we used that $\mathbb E(S\mid\Theta)$ is constant and
$\mathbb E\Theta_j^2=2/(k(k+1))$.

Now set $a:=t+\int_\R x\dd\nu(x)$. If $|a|>R$, then $[-R,R]
    \subset
    \{s\in\R:|s-a|\ge |a|-R\}$. Hence Chebyshev's inequality yields 
\[
    K_{r,t}^{(k)}([-R,R])
    \le
    \frac{v_k}{(|a|-R)^2}.
\]
Together with the trivial bound
$K_{r,t}^{(k)}([-R,R])\le1$, we have
\[
    K_{r,t}^{(k)}([-R,R])
    \le
    \min\left\{
        1,
        \frac{v_k}{(|a|-R)^2}
    \right\}.
\]

If $v_k=0$, then $ K_{r,t}^{(k)}
    =
    \delta_{\,t+\int_\R x\dd\nu(x)}$,
and hence
\[
    \int_\R K_{r,t}^{(k)}([-R,R])\dd t
    =
    2R.
\]

Suppose now that $v_k>0$. Splitting according to
\[
    \left|
        t+\int_\R x\dd\nu(x)
    \right|
    \le
    R+\sqrt{v_k},
\]
and using the preceding bound on the complement, we obtain
\[
\begin{aligned}
    \int_\R K_{r,t}^{(k)}([-R,R])\dd t
    \le
    2(R+\sqrt{v_k})
    +
    2\int_{R+\sqrt{v_k}}^\infty
    \frac{v_k}{(s-R)^2}\dd s
    =
    2R+4\sqrt{v_k}
\end{aligned}
\]
as needed.
\end{proof}
\section{Proof of Theorem~\ref{thm:main1}}\label{pf}
We now prove Theorem~\ref{thm:main1}. By Proposition~\ref{prop:char}, $\mu_1$ and $\mu_2$ have matching moments through order $k-1$, and  $U_k^{\mu_1}(t)\le U_k^{\mu_2}(t)$ for all $t\in\R$.

\medskip
\noindent\textbf{Claim} $(\star)$. $U_k^{\mu_1\boxp\nu}(t)
    \le
    U_k^{\mu_2\boxp\nu}(t)$ for all $t\in\R$.
\begin{proof}[\sc Proof of Claim~$(\star)$]
For every Borel set $A\subset\R$, define
\begin{equation}\label{eq:Mdef}
    M_k(A)
    :=
    \int_0^1\int_\R
    (
        U_k^{\mu_2}(t)-U_k^{\mu_1}(t)
    )
    K_{r,t}^{(k)}(A)\dd t\dd r
\end{equation}
By Lemma~\ref{lem:mea} and the nonnegativity of
$U_k^{\mu_2}-U_k^{\mu_1}$, \eqref{eq:Mdef} defines a positive Borel measure on $\R$. By Lemma~\ref{lem:C0}, $U_k^{\mu_2}-U_k^{\mu_1}\in C_0(\R)$. Given $\varepsilon>0$, choose $T>0$ such that
\[
    |
        U_k^{\mu_2}(t)-U_k^{\mu_1}(t)
    |
    <\varepsilon
    \quad \text{whenever }|t|>T.
\]
Then, for every $R>0$, by $K_{r,t}^{(k)}([-R,R])\le1$ on $[-T,T]$
and Lemma~\ref{lem:higher-order-kernel-localization}, we have
\[
\begin{aligned}
    M_k([-R,R])
    &=
    \int_0^1\int_{|t|\le T}
    (
        U_k^{\mu_2}(t)-U_k^{\mu_1}(t)
    )
    K_{r,t}^{(k)}([-R,R])
    \dd t\dd r \\
    &\quad+
    \int_0^1\int_{|t|>T}
    (
        U_k^{\mu_2}(t)-U_k^{\mu_1}(t)
    )
    K_{r,t}^{(k)}([-R,R])
    \dd t\dd r \\
    &\le
    2T
    \|
        U_k^{\mu_2}-U_k^{\mu_1}
    \|_\infty
    +
    \varepsilon
    \int_0^1\int_{|t|>T}
    K_{r,t}^{(k)}([-R,R])
    \dd t\dd r \\
    &\le
    2T
    \|
        U_k^{\mu_2}-U_k^{\mu_1}
    \|_\infty
    +
    \varepsilon
    \left(
        2R+4\sqrt{v_k}
    \right).
\end{aligned}
\]
In particular, $M_k$ is finite on compact sets and hence is a Radon measure. Moreover,
\[
    \limsup_{R\to\infty}
    \frac{M_k([-R,R])}{R}
    \le 2\varepsilon.
\]
Since $\varepsilon>0$ is arbitrary, we obtain $M_k([-R,R])=o(R)$ as $R\to\infty$. By Lemma~\ref{lem:uniqueness},
$(z-s)^{-k-1}$ is integrable with respect to $M_k$. Hence
\begin{equation}\label{eq:Mtransform}
\begin{aligned}
    k!\int_\R
    \frac{\dd M_k(s)}{(z-s)^{k+1}}
    &=
    k!\int_0^1\int_\R
    (
        U_k^{\mu_2}(t)-U_k^{\mu_1}(t)
    )
    \left(
        \int_\R
        \frac{\dd K_{r,t}^{(k)}(s)}
             {(z-s)^{k+1}}
    \right)
    \dd t\dd r
    \\
    &=
    k!\int_0^1\int_\R
    (
        U_k^{\mu_2}(t)-U_k^{\mu_1}(t)
    )
    \frac{\omega_r'(z)}
         {(\omega_r(z)-t)^{k+1}}
    \dd t\dd r
    \\
    &=
    \int_0^1
    \omega_r'(z)
    (
        G_{\mu_2}(\omega_r(z))
        -
        G_{\mu_1}(\omega_r(z))
    )
    \dd r
    \\
    &=
    G_{\mu_2\boxplus\nu}(z)
    -
    G_{\mu_1\boxplus\nu}(z),
    \quad z\in\Cp.
\end{aligned}
\end{equation}
Here the first equality follows from \eqref{eq:Mdef} and
Fubini's theorem, the second
from \eqref{eq:higher-order-kernel-transform}, the third from
Lemma~\ref{lem:UC} applied at
$w=\omega_r(z)$, and the last from
\eqref{eq:integrated-subordination}.

On the other hand, by the Minkowski inequality for noncommutative
$L^p$-spaces \cite[Proposition~4.40]{Hiai21}, the assumptions
$\mu_1,\mu_2\in\Pp_{k-1}(\R)$ and
$\nu\in\Pp_{\max\{2,k-1\}}(\R)$ imply that $\mu_1\boxp\nu,\mu_2\boxp\nu\in\Pp_{k-1}(\R)$. Lemma~\ref{lem:UC} then yields
\begin{equation}\label{eq:outputpotentialtransform}
    G_{\mu_2\boxp\nu}(z)-G_{\mu_1\boxp\nu}(z)
    =
    k!\int_\R
    \frac{
        U_k^{\mu_2\boxp\nu}(t)-U_k^{\mu_1\boxp\nu}(t)
    }{(z-t)^{k+1}}
    \dd t,
    \quad z\in\C^+.
\end{equation}
Moreover, since $\mu_1$ and $\mu_2$ have matching moments through
order $k-1$, the moment--cumulant formula
\cite[Proposition~11.4]{NS06} and the additivity of free cumulants
\cite[Proposition~12.3]{NS06} imply that
$\mu_1\boxp\nu$ and $\mu_2\boxp\nu$ also have matching moments through order $k-1$. Thus Lemma~\ref{lem:C0} gives $U_k^{\mu_2\boxp\nu}-U_k^{\mu_1\boxp\nu}\in C_0(\R)$ and therefore
\[
    \int_{-R}^R
    |
        U_k^{\mu_2\boxp\nu}(t)-U_k^{\mu_1\boxp\nu}(t)
    |
    \dd t
    =
    o(R)\quad\text{as $R\to\infty$}.
\]
Comparing \eqref{eq:Mtransform} and \eqref{eq:outputpotentialtransform},
and applying Lemma~\ref{lem:uniqueness} to the signed Radon measure
\[
    \dd\widetilde M(t)
    :=
    \dd M_k(t)
    -
    (
        U_k^{\mu_2\boxp\nu}(t)-U_k^{\mu_1\boxp\nu}(t)
    )\dd t,
\]
which satisfies $|\widetilde M|([-R,R])=o(R)$, we obtain $\widetilde M=0$. Hence $\dd M_k(t)
    =
    (
        U_k^{\mu_2\boxp\nu}(t)-U_k^{\mu_1\boxp\nu}(t)
    )\dd t.$
Therefore, the positivity of $M_k$ implies that
\[
    U_k^{\mu_2\boxp\nu}(t)-U_k^{\mu_1\boxp\nu}(t)\ge0\quad\text{for almost every $t\in\R$.}
\]
By continuity, the inequality holds
for every $t\in\R$, as claimed.
\end{proof}
By Claim~$(\star)$, we have $U_k^{\mu_1\boxplus\nu}(t)
\le
U_k^{\mu_2\boxplus\nu}(t)$ for all $t\in\R$.
Moreover, as shown above, $\mu_1\boxplus\nu$ and $\mu_2\boxplus\nu$ have matching moments up to order $k-1$. Therefore, Proposition~\ref{prop:char} yields $\mu_1\boxplus\nu\prec_{k\text{-cx}}\mu_2\boxplus\nu,$
as desired.\qed

\medskip
In particular, taking $k=4$ and applying
Proposition~\ref{prop:Heinavaara-order} yields
Corollary~\ref{cor:main2}.


\begin{thebibliography}{99}


\bibitem[BB07]{BB07}
S.~T. Belinschi and H.~Bercovici,
\newblock A new approach to subordination results in free probability,
\newblock \emph{J. Anal. Math.} \textbf{101} (2007), 357--365.

\bibitem[BBH22]{BBH22}
S.~T. Belinschi, H.~Bercovici, and C.-W.~Ho,
\newblock On the convergence of Denjoy--Wolff points,
\newblock arXiv:2203.16728, 2022.

\bibitem[Bia98]{Biane98}
P.~Biane,
\newblock Processes with free increments,
\newblock \emph{Math. Z.} \textbf{227} (1998), no.~1, 143--174.

\bibitem[BS12]{BS12}
S.~T. Belinschi and D.~Shlyakhtenko,
\newblock Free probability of type B: analytic interpretation and applications,
\newblock \emph{Amer. J. Math.} \textbf{134} (2012), no.~1, 193--234.

\bibitem[BV93]{BV93}
H.~Bercovici and D.~Voiculescu,
\newblock Free convolution of measures with unbounded support,
\newblock \emph{Indiana Univ. Math. J.} \textbf{42} (1993), no.~3, 733--773.

\bibitem[DLS98]{DLS98}
M.~Denuit, C.~Lef\`evre, and M.~Shaked,
\newblock The $s$-convex orders among real random variables, with applications,
\newblock \emph{Math. Inequal. Appl.} \textbf{1} (1998), no.~4, 585--613.


\bibitem[Hei26]{Hein26}
O.~Hein\"avaara,
\newblock Convolution comparison measures,
\newblock arXiv:2602.10373v1, 2026.

\bibitem[Hia21]{Hiai21}
F.~Hiai,
\textit{Lectures on Selected Topics in von Neumann Algebras},
EMS Series of Lectures in Mathematics,
EMS Press, Berlin, 2021.

\bibitem[Leg22]{Legoll22}
F.~Legoll,
\newblock \emph{Partial Differential Equations: Variational Approaches},
\newblock lecture notes, 2022.

\bibitem[Les17]{Lesch17}
M.~Lesch,
\newblock Divided differences in noncommutative geometry:
Rearrangement Lemma, functional calculus and expansional formula,
\newblock \emph{J. Noncommut. Geom.} \textbf{11} (2017), no.~1, 193--223.

\bibitem[Maa92]{Maa92}
H.~Maassen,
\newblock Addition of freely independent random variables,
\newblock \emph{J. Funct. Anal.} \textbf{106} (1992), no.~2, 409--438.

\bibitem[NS06]{NS06}
A.~Nica and R.~Speicher,
\newblock \emph{Lectures on the Combinatorics of Free Probability},
\newblock London Mathematical Society Lecture Note Series, vol.~335,
Cambridge University Press, Cambridge, 2006.


\bibitem[Voi86]{Voi86}
D.~Voiculescu,
\newblock Addition of certain non-commuting random variables,
\newblock \emph{J. Funct. Anal.} \textbf{66} (1986), no.~3, 323--346.

\end{thebibliography}
\end{document}